\renewcommand{\@seccntformat}[1]{\bf\csname the#1\endcsname.}
\renewcommand{\section}{\@startsection{section}{1}
	\z@{.7\linespacing\@plus\linespacing}{.5\linespacing}
	{\normalfont\upshape\bfseries\centering}}
\renewcommand{\@biblabel}[1]{\@ifnotempty{#1}{#1.}}
\theoremstyle{plain}
\newtheorem{thm}{Theorem}[section]
\newtheorem{prop}[thm]{Proposition}
\theoremstyle{definition}
\newtheorem{ex}[thm]{Example}
\newtheorem{defn}[thm]{Definition}
\newtheorem{rem}{Remark}[section]
\def \>{\succ}
\def \<{\prec}
\begin{document}	
\title[Basdouri Imed\textsuperscript{1}, Bouzid Mosbahi\textsuperscript{2}]{CLASSIFICATION OF ($\rho,\tau,\sigma$)-DERIVATIONS OF TWO-DIMENSIONAL LEFT-SYMMETRIC DIALGEBRAS}
	\author{Basdouri Imed\textsuperscript{1}, Bouzid Mosbahi\textsuperscript{2}}
 \address{\textsuperscript{1}Department of Mathematics, Faculty of Sciences, University of Gafsa, Gafsa, Tunisia}
\address{\textsuperscript{2}Department of Mathematics, Faculty of Sciences, University of Sfax, Sfax, Tunisia}

 \email{\textsuperscript{1}basdourimed@yahoo.fr}
\email{\textsuperscript{2}mosbahi.bouzid.etud@fss.usf.tn}
	

	\keywords{Left-symmetric Dialgebra, Derivations, Generalized Derivations}
	\subjclass[2020]{17A60,17A99,17A36,68W30}
	

	\date{\today}

\begin{abstract}
We introduce and study a generalized form of derivations for dendriform algebras, specifying all admissible parameter values that define these derivations. Additionally, we present a complete classification of generalized derivations for two-dimensional left-symmetric dialgebras over the field $\mathbb{K}$.
\end{abstract}
\maketitle
\section{ Introduction}
Leibniz algebras, introduced by J.-L. Loday, are a generalization of Lie algebras. They modify the structure of Lie algebras by removing the requirement that the bracket be skew-symmetric. Instead of the Jacobi identity, Leibniz algebras follow the Leibniz identity, which leads to new and flexible algebraic properties \cite{1}. This change provides a different algebraic framework that connects associative algebras with Lie algebras in a new way, much like Leibniz algebras relate to diassociative algebras (also known as associative dialgebras). Diassociative algebras extend the idea of associative algebras by introducing two different products, which allows them to better represent systems where non-commutativity plays a role. Loday showed that any diassociative algebra can be transformed into a Leibniz algebra by defining a suitable commutator bracket \cite{2,3}.
The concept of left-symmetric dialgebras was further developed by R. Felipe \cite{4}, who explored algebras with two distinct products that include dialgebras as special cases. Left-symmetric dialgebras, formally introduced in 2011, expand on the idea of left-symmetric algebras \cite{5,6,7}.
In this paper, we focus on finding and describing the $(\rho, \tau, \sigma)$-derivations of left-symmetric dialgebras. We provide a step-by-step algorithm for computing these derivations, especially in low-dimensional cases, and present the results in tables. This study builds on the classification results of two-dimensional left-symmetric dialgebras by Rikhsiboev (2016) in \cite{8}.
\section{Preliminaries}
\begin{defn}
Let  $S$ be a vector space over a field  $\mathbb{K}$  equipped with two associative bilinear binary operations  $\dashv :S \times S \rightarrow S$  and $\vdash:S \times S \rightarrow S$ satisfying the axioms:
\begin{align*}
p \dashv (q \dashv r) &= p \dashv (q \vdash r),\\
(p \vdash q) \dashv r &= p \vdash (q \dashv r),\\
(p \vdash q) \vdash r &= (p \dashv q) \vdash r,
\end{align*}
for all $p, q, r \in S$. Then the triple  $(S, \dashv, \vdash)$  is called a \textit{diassociative algebra}.
\end{defn}

\begin{defn}
Let  $S$ be a vector space over a field $\mathbb{K}$. Suppose $S$ is equipped with two bilinear products, not necessarily associative,
\begin{align*}
&\dashv : S \times S \rightarrow S \quad \text{and} \quad \vdash : S \times S \rightarrow S,
\end{align*}
satisfying the following identities for all  $p, q, r \in S$:
\begin{align*}
    p \dashv (q \dashv r) &= p \dashv (q \vdash r), \\
    (p \vdash q) \vdash r &= (p \dashv q) \vdash r, \\
    p \dashv (q \dashv r) - (p \dashv q) \dashv r &= q \vdash (p \dashv r) - (q \vdash p) \dashv r, \\
    p \vdash (q \vdash r) - (p \vdash q) \vdash r &= q \vdash (p \vdash r) - (q \vdash p) \vdash r,
\end{align*}
Then  $S$  is called a \textit{left-symmetric dialgebra} or a \textit{left disymmetric algebra}.
\end{defn}
The operations $\dashv$ and $\vdash$ are called left and right products, respectively.

\begin{defn}
A \textit{morphism} of left-symmetric dialgebras from  $S$  to $S'$ is a linear map $\pi: S \to S'$ such that
\begin{align*}
\pi(p \dashv q) &= \pi(p) \dashv \pi(q) \quad \text{and} \quad \pi(p \vdash q) = \pi(p) \vdash \pi(q),
\end{align*}
for all $p, q \in S$.
\end{defn}

\begin{rem}
A bijective homomorphism is an isomorphism of $S$  and  $S'$.
\end{rem}

\begin{ex}
Any left-symmetric algebra  $S$ is a left-symmetric dialgebra since we can define two products  $\dashv$ and $\vdash$ on  $S$ as follows: for any  $p, q \in S$,
\begin{align*}
p \dashv q &= pq = p \vdash q,
\end{align*}
where $pq$ denotes the product in the associative algebra $S$.
\end{ex}
\begin{ex}
Let  $\mathbb{K}[p, q]$ be a polynomial algebra over a field $\mathbb{K}\; (char\mathbb{K} = 0)$,
with two indeterminates  $p$ and  $q$. We define the left product $\dashv$ and the right product $\vdash$ on $\mathbb{K}[p, q]$ as follows:

\begin{align*}
\phi(p, q)\dashv \psi(p, q) = \phi(p,q)\dashv \psi(q,q)
\end{align*}
and
\begin{align*}
\phi(p,q)\vdash \psi(p,q) = \phi(p,q) \vdash \psi(p,q).
\end{align*}

Then  $(\mathbb{K}[p, q], \dashv,\vdash)$ is a left-symmetric dialgebra.
\end{ex}

\begin{defn}
Let $S$ be a left-symmetric dialgebra. A \textit{derivation} of  $S$  is a linear transformation  $d : S \rightarrow S$  satisfying
\begin{align*}
d(p \dashv q) &= d(p) \dashv q + p \dashv d(q) \quad \text{and} \quad d(p \vdash q) = d(p) \vdash q + p \vdash d(q)
\end{align*}
for all $p, q \in S$.
\end{defn}

\begin{defn}
A left-symmetric dialgebra $S$ is called \textit{characteristically nilpotent} if $Der(S)$ (the set of all derivations) is nilpotent as a Lie algebra.
\end{defn}
\section{ $(\rho, \tau, \sigma)$-Derivations of Left-Symmetric Dialgebras}

\begin{defn}
A linear operator $d \in \text{End}_S$ is said to be an $(\rho, \tau, \sigma)$-derivation of a left symmetric dialgebra $S$, where $(\rho, \tau, \sigma)$ are fixed elements of a field $\mathbb{K}$, if for all $p, q \in S$:

\begin{align*}
\rho d(p \dashv q) &= \tau d(p) \dashv q + \sigma p \dashv d(q)
\end{align*}
and
\begin{align*}
\rho d(p \vdash q) &= \tau d(p) \vdash q + \sigma p \vdash d(q),
\end{align*}

where $\dashv$ and $\vdash$ denote the operations of the left symmetric dialgebra $S$.
\end{defn}
The set of all $(\rho, \tau, \sigma)$-derivations of $S$ is denoted by $\text{Der}_{(\rho, \tau, \sigma)}S$. It is evident that $\text{Der}_{(\rho, \tau, \sigma)}S$ is a linear subspace of $\text{End}_S$.
\begin{prop}\label{P1}
Let $S$ be a complex left symmetric dialgebra. Then the values of $\rho, \tau, \sigma$ in  $Der_{(\rho,\tau,\sigma)}S$ are distributed as follows:
\begin{align*}
\text{Der}_{(1,1,1)}S & = \{ d \in \text{End}S \mid d(p \ast q) = d(p) \ast q + p \ast d(q) \}, \\
\text{Der}_{(1,1,0)}S & = \{ d \in \text{End}S \mid d(p \ast q) = d(p) \ast q \}, \\
\text{Der}_{(1,0,1)}S & = \{ d \in \text{End}S \mid d(p \ast q) = p \ast d(q) \}, \\
\text{Der}_{(1,0,0)}S & = \{ d \in \text{End}S \mid d(p \ast q) = 0 \}, \\
\text{Der}_{(0,1,1)}S & = \{ d \in \text{End}S \mid d(p) \ast q = p \ast d(q) \}, \\
\text{Der}_{(0,0,1)}S & = \{ d \in \text{End}S \mid p \ast d(q) = 0 \}, \\
\text{Der}_{(0,1,0)}S & = \{ d \in \text{End}S \mid d(p) \ast q = 0 \}, \\
\text{Der}_{(0,1,\delta)}S & = \{ d \in \text{End}S \mid d(p) \ast q = \delta p \ast d(q), \delta \in \mathbb{C} \setminus \{0, 1\} \},
\end{align*}
where $\ast=\dashv,\vdash$ denotes the operation in the left symmetric dialgebra.
\end{prop}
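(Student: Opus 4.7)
The plan is to prove the proposition by a direct case analysis on the triple $(\rho,\tau,\sigma)$, exploiting the fact that the defining identity
\[
\rho\, d(p \ast q) = \tau\, d(p) \ast q + \sigma\, p \ast d(q)
\qquad (\ast\in\{\dashv,\vdash\})
\]
is linear in $(\rho,\tau,\sigma)$. Multiplying the triple by any nonzero scalar therefore leaves the subspace $\text{Der}_{(\rho,\tau,\sigma)}S$ unchanged, so one may normalize one coordinate to $1$ and reduce the classification to the eight families listed.

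First I would treat the branch $\rho\ne 0$: after rescaling so that $\rho=1$, each of the four assignments $(\tau,\sigma)\in\{0,1\}^{2}$ is substituted into the identity. Dropping the terms with coefficient $0$ produces, in order, the formulas for $\text{Der}_{(1,1,1)}S$, $\text{Der}_{(1,1,0)}S$, $\text{Der}_{(1,0,1)}S$, and $\text{Der}_{(1,0,0)}S$.

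Next I would treat the branch $\rho=0$. If $\tau\ne 0$, I rescale so that $\tau=1$; the identity becomes $0 = d(p)\ast q + \sigma\, p\ast d(q)$, which after the relabelling $\delta=-\sigma$ reads $d(p)\ast q = \delta\, p\ast d(q)$. Splitting according to whether $\delta=0$, $\delta=1$, or $\delta\in\mathbb{C}\setminus\{0,1\}$ recovers the three cases $\text{Der}_{(0,1,0)}S$, $\text{Der}_{(0,1,1)}S$, and $\text{Der}_{(0,1,\delta)}S$. When instead $\rho=\tau=0$ and $\sigma\ne 0$, rescaling $\sigma=1$ reduces the identity to $p\ast d(q)=0$, giving $\text{Der}_{(0,0,1)}S$; the wholly trivial triple $(0,0,0)$ imposes no condition and is therefore omitted.

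No serious obstacle is expected, since the argument is a routine case distinction driven by the linearity of the defining identity and the scaling invariance of the triple. The only point requiring care is the sign bookkeeping in the $\rho=0$ branch, where the reparametrization $\delta=-\sigma$ is implicit in the way the proposition is stated; I would make this convention explicit in the write-up so that the $(0,1,1)$ sub-line is recognized as the $\delta=1$ specialization of the one-parameter family $(0,1,\delta)$.
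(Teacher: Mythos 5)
Your $\rho\neq 0$ branch has a genuine gap: after normalizing $\rho=1$ you simply substitute each of the four assignments $(\tau,\sigma)\in\{0,1\}^2$, but you never explain why these four are the only cases that need to appear. Scaling invariance alone reduces a triple with $\rho\neq 0$ to a two-parameter family $(1,\tau,\sigma)$ with $\tau,\sigma$ arbitrary, not to four discrete triples; the entire content of the first half of the proposition is the claim that nothing outside $\tau,\sigma\in\{0,1\}$ has to be listed. The paper closes this by applying $d$ to the defining identities of the left-symmetric dialgebra and extracting the compatibility conditions $\tau(\tau-\rho)=0$ and $\sigma(\sigma-\rho)=0$, which force $\tau,\sigma\in\{0,\rho\}$; only after that does the rescaling $\operatorname{Der}_{(\rho,\tau,\sigma)}S=\operatorname{Der}_{(1,\tau/\rho,\sigma/\rho)}S$ produce the four listed cases. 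Your write-up contains no analogue of this step, so it establishes only that the four listed spaces are what the definition says they are --- which is immediate --- and not that the admissible values of $(\rho,\tau,\sigma)$ are exhausted by the eight families.

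The $\rho=0$ branch of your argument does match the paper: split on $\tau\neq 0$ versus $\tau=0$, normalize the first nonzero entry to $1$, and observe that $\sigma/\tau$ survives as a genuine parameter $\delta$ --- there is no constraint killing it here, which is precisely why this branch yields a one-parameter family while the $\rho\neq 0$ branch collapses to four points, and another reason the missing constraint in the first branch is not a cosmetic omission. Your remark about the sign, namely that with $\rho=0$, $\tau=1$ the identity literally reads $d(p)\ast q=-\sigma\,p\ast d(q)$, so that the proposition's $(0,1,1)$ and $(0,1,\delta)$ lines implicitly use the reparametrization $\delta=-\sigma$, is a fair observation that the paper's own proof does not address; making that convention explicit would be an improvement, but it does not repair the gap above.
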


\begin{proof}
Let $\rho \neq 0$. Applying the operator  $d$  to the identities of left symmetric dialgebras, we obtain the system of equations:
\begin{align*}
\tau(\tau - \rho) &= 0 \quad \text{and} \quad \sigma(\sigma - \rho) = 0.
\end{align*}
By considering each case, we find the following possible values of
\begin{align*}
&(\rho, \tau, \sigma) :  (\rho, \rho, \rho), (\rho, \rho, 0), (\rho, 0, \rho),\text{and }(\rho, 0, 0).
\end{align*}
Taking into account the fact that
\begin{align*}
\text{Der}_{(\rho,\tau,\sigma)}S &= \text{Der}_{(1,\frac{\tau}{\rho},\sigma)}S,
\end{align*}
we obtain the first four equalities in Proposition \ref{P1}
Now let $\rho = 0$. Then we have:
$$
(0, \tau, \tau), (0, \tau, 0)\;  \text{if}\;  \tau \neq 0,\; \text{and}\;
(0, 0, \sigma)\;  \text{if}\; \tau = 0.
$$
Thus, we find:
$$
\text{Der}_{(0,\rho,\sigma)}S = \text{Der}_{(0,1,\frac{\sigma}{\tau})}S \quad \text{if } \tau \neq 0,
$$
and
$$
\text{Der}_{(0,0,\sigma)}S = \text{Der}_{(0,0,1)}S \quad \text{if } \tau = 0 \text{ and } \sigma \neq 0.
$$
Finally, if $\sigma = 0$, then $\text{Der}_{(0,0,0)}S = \text{End}S$.
\end{proof}

\begin{prop}
Let $S$ be a left symmetric dialgebra, and let $d_1, d_2$ be $(\rho, \tau, \sigma)$-derivations on $S$. Then $[d_1, d_2] = d_1 \circ d_2 - d_2 \circ d_1$ is a ($\rho^2, \tau^2, \sigma^2$)-derivation of  $S$.
\end{prop}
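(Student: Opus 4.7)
The plan is a direct computation: start from the defining identity of a $(\rho,\tau,\sigma)$-derivation applied to $d_2$, then apply $d_1$ to both sides (multiplied by a factor of $\rho$ so the outer constants line up), expand using the identity for $d_1$, and then subtract the analogous expression obtained by swapping $d_1 \leftrightarrow d_2$. All this must be done for both products $\ast \in \{\dashv,\vdash\}$ simultaneously, but since the defining identity has exactly the same shape for $\dashv$ and $\vdash$, it suffices to carry out the calculation once with $\ast$ standing for either operation.

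Concretely, I would first write
\begin{align*}
\rho^2\, d_1 d_2 (p \ast q)
&= \rho\, d_1\!\bigl(\tau d_2(p)\ast q + \sigma p \ast d_2(q)\bigr) \\
&= \tau\bigl(\tau d_1 d_2(p)\ast q + \sigma d_2(p)\ast d_1(q)\bigr) + \sigma\bigl(\tau d_1(p)\ast d_2(q) + \sigma p \ast d_1 d_2(q)\bigr),
\end{align*}
using the $(\rho,\tau,\sigma)$-derivation identity twice (first for $d_2$, then for $d_1$). Next I would write the analogous expansion for $\rho^2 d_2 d_1(p\ast q)$ by interchanging the roles of $d_1$ and $d_2$.

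Subtracting the two expansions is the decisive step: the two fully-outer terms assemble into $\tau^2 [d_1,d_2](p)\ast q + \sigma^2 p \ast [d_1,d_2](q)$, while the two mixed terms $\tau\sigma\, d_2(p)\ast d_1(q)$ and $\tau\sigma\, d_1(p)\ast d_2(q)$ appear with the same coefficient in both expansions and therefore cancel. This yields exactly
\begin{equation*}
\rho^2 [d_1,d_2](p\ast q) = \tau^2 [d_1,d_2](p)\ast q + \sigma^2 p \ast [d_1,d_2](q),
\end{equation*}
which is the required $(\rho^2,\tau^2,\sigma^2)$-derivation identity.

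There is no real obstacle here: the proof is entirely formal and relies only on linearity of $d_1, d_2$ and on the fact that the defining identity is bilinear in the outputs of the derivations, so the mixed cross-terms produced by double application are symmetric in $(d_1,d_2)$ and drop out of the commutator. The only care needed is to apply the identity in the correct order (inner operator first, then outer) and to keep track of the factors of $\rho$ introduced by the two applications, so that the final equality is homogeneous of degree two in each of $\rho,\tau,\sigma$.
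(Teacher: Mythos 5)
Your proposal is correct and follows essentially the same route as the paper's proof: expand $\rho^2 d_1d_2(p\ast q)$ by applying the defining identity twice (inner operator first, then outer), do the same with $d_1$ and $d_2$ interchanged, and observe that the mixed terms $\tau\sigma\, d_2(p)\ast d_1(q)+\sigma\tau\, d_1(p)\ast d_2(q)$ are symmetric in $(d_1,d_2)$ and cancel in the commutator. The one genuine difference is that the paper splits into the cases $\rho\neq 0$ and $\rho=0$ and handles the latter by a separate manipulation of the degenerate identity $\tau d_i(p)\ast q=-\sigma p\ast d_i(q)$, whereas your single computation never divides by $\rho$ and is therefore valid uniformly for all $\rho$, making the case distinction unnecessary; this is a small but real simplification over the printed argument.
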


\begin{proof}
Let us consider the following two equations:
\begin{align}\label{E1}
\rho d_1(p \ast q) &= \tau d_1(p) \ast q + \sigma p \ast d_1(q)
\end{align}
and
\begin{align}\label{E2}
\rho d_2(p \ast q) &= \tau d_2(p) \ast q + \sigma p \ast d_2(q).
\end{align}

\textbf{Case 1:} $\rho \neq 0$.

\begin{align*}
\rho^2 [d_1, d_2](p \ast q) &= \rho^2(d_1 \circ d_2)(p \ast q) - \rho^2(d_2 \circ d_1)(p \ast q)\\
&= \rho d_1(\rho d_2(p \ast q)) - \rho d_2(\rho d_1(p \ast q))\\
&= \rho d_1(\tau d_2(p) \ast q + \sigma p \ast d_2(q)) - \rho d_2(\tau d_1(p) \ast q + \sigma p \ast d_1(q))\\
&= \tau (\rho d_1(d_2(p) \ast q)) + \sigma (\rho d_1(p \ast d_2(q))) - \tau (\rho d_2(d_1(p) \ast q) + \sigma \rho d_2(p \ast d_1(q))).
\end{align*}

Finally, we get
\begin{align*}
\rho^2 [d_1, d_2](p \ast q) &= \tau^2 [d_1, d_2](p) \ast q + \sigma^2 p \ast [d_1, d_2](q).
\end{align*}

\textbf{Case 2:} $\rho = 0$. Then from equations (\ref{E1}) and (\ref{E2}), we have:
\begin{align}\label{E3}
\tau d_1(p) \ast q &= -\sigma p \ast d_1(q)
\end{align}
and
\begin{align}\label{E4}
\tau d_2(p) \ast q &= -\sigma p \ast d_2(q).
\end{align}

Therefore,
\begin{align*}
\tau^2 [d_1, d_2](p \ast q) &= \tau^2 (d_1 \circ d_2)(p) \ast q - \tau^2 (d_2 \circ d_1)(p) \ast q\\
&= \tau^2 (d_1 d_2)(p) \ast q - \tau^2 (d_2 d_1)(p) \ast q\\
&= \tau \left( \tau d_1(d_2(p) \ast q) - \tau d_2(d_1(p) \ast q) \right).
\end{align*}
Using equations (\ref{E3}) and (\ref{E4}), we obtain:
\begin{align*}
\tau^2 [d_1, d_2](p) \ast q &= \tau(-\sigma d_2(p) d_1(q)) + \tau(\sigma d_1(p) d_2(q))\\
&= -\sigma(\tau d_2(p) d_1(q) + \sigma(\tau d_1(p) d_2(q))).
\end{align*}
Due to (\ref{E3}) and (\ref{E4}) again, we have:
\begin{align*}
\sigma^2 p \ast d_2(d_1(q)) - \sigma^2 p \ast d_1(d_2(q)) &= \sigma^2 p \left[ d_2(d_1(q)) - d_1(d_2(q)) \right]= \sigma^2 p \ast [d_1, d_2](q).
\end{align*}
\end{proof}

\begin{rem}
For any $\rho, \tau, \sigma \in \mathbb{C}$, the dimension of the vector space
\(\text{Der}_{(\alpha, \beta, \gamma)} S\) is an isomorphism invariant of left symmetric dialgebras.
\end{rem}

\section{ $(\rho, \tau, \sigma)$-Derivations of Low-Dimensional Left-Symmetric Dialgebras}
This section is devoted to the description of generalized derivations of two-dimensional left-symmetric dialgebras. Let $\{e_1, e_2, e_3, \ldots, e_n\}$ be a basis of an $n$-dimensional left-symmetric dialgebra $S$. Then the multiplication in the algebra can be expressed as follows:
\begin{align*}
e_i\dashv e_j &= \sum_{k=1}^{n} \gamma_{ij}^{k} e_k \quad \text{and} \quad e_s\dashv e_t = \sum_{l=1}^{n} \gamma_{st}^{l} e_l,
\end{align*}
Where $i, j, s, t = 1, 2, \dots, n$. The coefficients of the above linear combinations $\{\gamma^k_{ij}, \delta^l_{st}\} \in \mathbb{K}^{2n^3}$ are called the structure constants of $S$ on the basis $\{e_1, e_2, e_3, \dots, e_n\}$. A generalized derivation being a linear transformation of the vector space $S$ is represented in a matrix form $[d_{ij}]_{i,j=1,2,\dots,n}$, i.e.
\begin{align*}
d(e_i) &= \sum_{j=1}^n d_{ij} e_j, \quad i = 1, 2, \dots, n.
\end{align*}
According to the definition of the generalized derivation, the entries $d_{ij}, i, j = 1, 2, \dots, n$, of the matrix $[d_{ij}]_{i,j=1,2,\dots,n}$ must satisfy the following system of equations:
\begin{align*}
\sum_{t=1}^n (\rho \gamma^t_{ij} d_{st} - \tau d_{ti} \gamma^s_{tj} - \sigma d_{tj} \gamma^s_{it}) &= 0, \quad \text{for } i, j, s = 1, 2, 3, \dots, n,\\
\sum_{l=1}^n (\rho \delta^l_{st} d_{ml} - \tau d_{ls} \delta^m_{lt} - \sigma d_{lt} \delta^m_{sl}) &= 0, \quad \text{for } s, t, m = 1, 2, 3, \dots, n.
\end{align*}
It is observed that if the structure constants $\{\gamma^k_{ij}, \delta^l_{st}\}$ of of a left-symmetric dialgebra $S$ are given, then in order to describe its generalized derivation one has to solve the system of equations above with respect to $d_{ij}, i, j = 1, 2, \dots, n$, which can be done by using computer software. The values of structure constants $\gamma^k_{ij}$ and $\delta^l_{st}$ for two-dimensional left-symmetric dialgebra are obtained from the classification results of Rikhsiboev (2016), where  $\text{LSD}_n^m$ denotes the $m-th$ isomorphism class of $n$-dimensional complex left-symmetric dialgebras.
\begin{thm}
Any two-dimensional left-symmetric dialgebra $S$ is included in one of the following isomorphism classes of algebras:
For all $a,b,c \in \mathbb{K}$\\
$L^1_2(a,b):e_1 \dashv e_2 = e_1, \quad e_2 \dashv e_2 = e_2, \quad e_2 \vdash e_1 = a e_1, \quad e_2 \vdash e_2 = b e_1 + e_2, \quad a \neq 0.$\\
$L^2_2(b,c):e_1 \dashv e_2 = e_1, \quad e_2 \dashv e_2 = c e_1 + e_2, \quad e_2 \vdash e_2 = b e_1 + e_2, \quad c \neq 0.$\\
$L^3_2(b):e_1 \dashv e_2 = e_1, \quad e_2 \dashv e_2 = e_2, \quad e_2 \vdash e_2 = b e_1 + e_2.$\\
$L^4_2(c):e_2 \dashv e_2 = c e_1 + e_2, \quad e_2 \vdash e_1 = e_1, \quad e_2 \vdash e_2 = e_2.$\\
$L^5_2(a,c):e_2 \dashv e_2 = c e_1 + e_2, \quad e_2 \vdash e_1 = a e_1, \quad e_2 \vdash e_2 = c(1 - a)e_1 + e_2, \quad a \neq 1.$\\
$L^6_2(a):e_2 \dashv e_2 = e_2, \quad e_2 \vdash e_1 = a e_1, \quad e_2 \vdash e_2 = e_2, \quad a \neq 0.$
\end{thm}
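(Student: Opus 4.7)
The plan is to parametrize the space of all possible left-symmetric dialgebra structures on a $2$-dimensional vector space with basis $\{e_1, e_2\}$, impose the four defining axioms to reduce the resulting polynomial system, and then quotient by the natural $\text{GL}(2,\mathbb{K})$-action to read off the isomorphism classes.

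First I would write the most general bilinear operations
$$e_i \dashv e_j = \alpha_{ij}^1 e_1 + \alpha_{ij}^2 e_2, \qquad e_i \vdash e_j = \beta_{ij}^1 e_1 + \beta_{ij}^2 e_2$$
in terms of sixteen unknown structure constants. Plugging each triple $(e_i,e_j,e_k) \in \{e_1,e_2\}^3$ into the four identities defining a left-symmetric dialgebra yields four systems of polynomial equations, each of eight equations, in these sixteen unknowns. Because the defining identities are multilinear, comparing coefficients of $e_1$ and $e_2$ on each side converts the whole problem into a purely algebraic system of quadratic relations among the $\alpha_{ij}^k$ and $\beta_{ij}^k$.

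Second, I would solve this system by a branching case analysis according to which structure constants vanish, or more precisely, according to the rank of the partial maps $e_2 \dashv(-)$, $e_2\vdash(-)$, and the status of $e_2\dashv e_2$ and $e_2\vdash e_2$ as potential ``idempotent-like'' elements. Each consistent branch leaves only a handful of surviving parameters, which become the $a,b,c$ appearing in the statement. Third, I would normalize each family using a generic change of basis
$$e_1' = A_{11}e_1 + A_{12}e_2, \qquad e_2' = A_{21}e_1 + A_{22}e_2, \qquad \det A \neq 0,$$
and rescale to eliminate redundant parameters. The branches collapse into the six canonical families $L_2^1(a,b)$ through $L_2^6(a)$, and the side conditions $a \neq 0$, $c \neq 0$, $a \neq 1$ arise naturally to prevent degenerate cases that would otherwise fall into one of the other families.

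The main obstacle will be the bookkeeping of the case split: ensuring that every consistent solution of the structure-constant system is captured exactly once, and that no two distinct families in the final list are related by an admissible basis change. Verifying nonequivalence will force one to exhibit isomorphism invariants---for instance, the dimensions of $\text{Der}(S)$, the ranks of the two products, or the dimension of the annihilator of $\vdash$---that separate the classes pairwise. Organizing this comparison while simultaneously tracking which parameter specializations merge under an automorphism is the truly delicate step, and is precisely the part of the classification attributed to Rikhsiboev \cite{8} on which the subsequent analysis of $(\rho,\tau,\sigma)$-derivations will rest.
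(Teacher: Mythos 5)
The paper itself contains no proof of this theorem: it is quoted verbatim from Rikhsiboev's 2016 classification (reference [8]), and the authors explicitly say the structure constants ``are obtained from the classification results of Rikhsiboev.'' So there is no internal argument to compare yours against; the honest baseline is a citation.

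Measured as a proof, your proposal has a genuine gap: it is a plan, not an argument. Every step that carries mathematical content is deferred. You never write down the quadratic system (and your count is off --- four identities over eight ordered triples and two coordinates gives $64$ scalar equations, not ``four systems, each of eight equations''), you never execute the branching case analysis, you never perform the normalization under $\mathrm{GL}(2,\mathbb{K})$, and you never exhibit the invariants that separate the six families. These are exactly the places where such classifications go wrong --- a missed branch, two families that turn out to be isomorphic for special parameter values, or a normalization that silently assumes $\mathbb{K}$ is algebraically closed or of characteristic zero (note the theorem is stated over an arbitrary $\mathbb{K}$, while [8] classifies \emph{complex} left-symmetric dialgebras, a discrepancy your outline would need to confront when rescaling). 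You correctly identify the delicate step in your final paragraph, but identifying it is not the same as resolving it. A useful observation that your sketch should exploit, and which hints at how the actual proof in [8] is organized: in every listed family $e_1$ annihilates everything on the left and $e_1\dashv e_1=e_1\vdash e_1=0$, so the classification almost certainly begins by producing such a distinguished degenerate basis vector rather than by brute-force reduction of all sixteen structure constants. As it stands, your proposal is a reasonable table of contents for Rikhsiboev's paper, but it does not establish the theorem.
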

The $(\rho, \tau, \sigma)$-derivations of the study of two-dimensional left-symmetric dialgebras over a field $\mathbb{K}$ are given in the following tables.
\vspace{3in}
\begin{center}
\begin{tabular}{|c|c|c|c|}
\hline
\textbf{IC} & \textbf{($\rho$, $\tau$, $\sigma$)} & \textbf{${\text{Der}}_{(\rho, \tau, \sigma)} S$} & \textbf{Dim} \\
\hline
$L^1_2(a,b)$ & (1,1,1) & $\left(\begin{array}{cc} \frac{d_{12}(a-1)}{b} & d_{12} \\ 0 & 0 \end{array}\right)$ & 1 \\
\cline{2-4}
& (1,1,0) & $\left(\begin{array}{cc} d_{22} & 0 \\ 0 & d_{22} \end{array}\right)$ & 1 \\
\cline{2-4}
& (1,0,1) & $\left(\begin{array}{cc} d_{22} & 0 \\ 0 & d_{22} \end{array}\right)$ & 1 \\
\cline{2-4}
& (1,0,0) & trivial & 0 \\
\cline{2-4}
& (0,1,1) & trivial & 0 \\
\cline{2-4}
& (0,0,1) & trivial & 0 \\
\cline{2-4}
& (0,1,0) & trivial & 0 \\
\cline{2-4}
& (0,1,\(\delta\)) & $\left(\begin{array}{cc} d_{22} & 0 \\ 0 & d_{22} \end{array}\right)$ & 1 \\
\hline

$L^2_2(b,c)$ & (1,1,1) & $\left(\begin{array}{cc} 0 & 0 \\ 0 & 0 \end{array}\right)$ & 0 \\
\cline{2-4}
& (1,1,0) & $\left(\begin{array}{cc} d_{22} & 0 \\ 0 & d_{22} \end{array}\right)$ & 1 \\
\cline{2-4}
& (1,0,1) & $\left(\begin{array}{cc} d_{22} & 0 \\ 0 & d_{22} \end{array}\right)$ & 1 \\
\cline{2-4}
& (1,0,0) & trivial & 0 \\
\cline{2-4}
& (0,1,1) & trivial & 0 \\
\cline{2-4}
& (0,0,1) &$\left(\begin{array}{cc} d_{11} & d_{12} \\ 0 & 0 \end{array}\right)$ & 2\\
\cline{2-4}
& (0,1,0) & trivial & 0 \\
\cline{2-4}
& (0,1,\(\delta\)) & $\left(\begin{array}{cc} d_{22} & 0 \\ 0 & d_{22} \end{array}\right)$ & 1 \\
\hline
$L^3_2(b)$ & (1,1,1) & $\left(\begin{array}{cc}d_{11} & -bd_{11} \\ 0 & 0 \end{array}\right)$ & 1 \\
\cline{2-4}
& (1,1,0) & $\left(\begin{array}{cc} d_{11} & -bd_{11}+bd_{22} \\ 0 & d_{22} \end{array}\right)$ & 2 \\
\cline{2-4}
& (1,0,1) & $\left(\begin{array}{cc} d_{22} & 0 \\ 0 & d_{22} \end{array}\right)$ & 1 \\
\cline{2-4}
& (1,0,0) & trivial & 0 \\
\cline{2-4}
& (0,1,1) & trivial & 0 \\
\cline{2-4}
& (0,0,1) & $\left(\begin{array}{cc} d_{11} & d_{12} \\ 0 & 0 \end{array}\right)$ & 2 \\
\cline{2-4}
& (0,1,0) & trivial & 0 \\
\cline{2-4}
& (0,1,\(\delta\)) & $\left(\begin{array}{cc} d_{22} & 0 \\ 0 & d_{22} \end{array}\right)$ & 1 \\
\hline
\end{tabular}
\end{center}
\begin{center}
\begin{tabular}{|c|c|c|c|}
\hline
\textbf{IC} & \textbf{($\rho$, $\tau$, $\sigma$)} & \textbf{${\text{Der}}_{(\rho, \tau, \sigma)} S$} & \textbf{Dim} \\
\hline
$L^4_2(c)$ & (1,1,1) & $\left(\begin{array}{cc}d_{11}& -cd_{11} \\ 0 & 0 \end{array}\right)$ & 1 \\
\cline{2-4}
& (1,1,0) & $\left(\begin{array}{cc} d_{11} & -bd_{11}+bd_{22} \\ 0 & d_{22} \end{array}\right)$ & 2 \\
\cline{2-4}
& (1,0,1) & $\left(\begin{array}{cc} d_{11} & -cd_{11}+cd_{22} \\ 0 & d_{22} \end{array}\right)$ & 2 \\
\cline{2-4}
& (1,0,0) & trivial & 0 \\
\cline{2-4}
& (0,1,1) & trivial & 0 \\
\cline{2-4}
& (0,0,1) & trivial & 0 \\
\cline{2-4}
& (0,1,0) &$\left(\begin{array}{cc} d_{11} & d_{12} \\ 0 & 0 \end{array}\right)$ & 2 \\
\cline{2-4}
& (0,1,\(\delta\)) & $\left(\begin{array}{cc} d_{11} & 0 \\ 0 & d_{11} \end{array}\right)$ & 1 \\
\hline
$L^5_2(a,c)$ & (1,1,1) & $\left(\begin{array}{cc}d_{11} & -cd_{11} \\ 0 & 0 \end{array}\right)$ & 1 \\
\cline{2-4}
& (1,1,0) & $\left(\begin{array}{cc} d_{22} & 0 \\ 0 & d_{22} \end{array}\right)$ & 1 \\
\cline{2-4}
& (1,0,1) &  $\left(\begin{array}{cc} d_{11} & -cd_{11}+cd_{22} \\ 0 & d_{22} \end{array}\right)$ & 2 \\
\cline{2-4}
& (1,0,0) & trivial & 0 \\
\cline{2-4}
& (0,1,1) & trivial & 0 \\
\cline{2-4}
& (0,0,1) & trivial & 0 \\
\cline{2-4}
& (0,1,0) & $\left(\begin{array}{cc} d_{11} & d_{12} \\ 0 & 0 \end{array}\right)$ & 2 \\
\cline{2-4}
& (0,1,\(\delta\)) & $\left(\begin{array}{cc} d_{22} & 0 \\ 0 & d_{22} \end{array}\right)$ & 1 \\
\hline

$L^6_2(a)$ & (1,1,1) & $\left(\begin{array}{cc} d_{11}& 0 \\ 0 & 0 \end{array}\right)$ & 1 \\
\cline{2-4}
& (1,1,0) & $\left(\begin{array}{cc} d_{22} & 0 \\ 0 & d_{22} \end{array}\right)$ & 1 \\
\cline{2-4}
& (1,0,1) & $\left(\begin{array}{cc} d_{11} & 0 \\ 0 & d_{22} \end{array}\right)$ & 2 \\
\cline{2-4}
& (1,0,0) & trivial & 0 \\
\cline{2-4}
& (0,1,1) & trivial & 0 \\
\cline{2-4}
& (0,0,1) & trivial & 0 \\
\cline{2-4}
& (0,1,0) & $\left(\begin{array}{cc} d_{11} & d_{12} \\ 0 & 0 \end{array}\right)$ & 2 \\
\cline{2-4}
& (0,1,\(\delta\)) & $\left(\begin{array}{cc} d_{11} & d_{12} \\ 0 & 0 \end{array}\right)$ & 2 \\
\hline
\end{tabular}
\end{center}

\newpage

\textbf{Conclusion:}
This study enabled us to calculate the generalized derivations of two-dimensional left-symmetric dialgebras and to determine the dimension of the generalized derivation space for each representative class, ranging from 0 to 2.\\
\textbf{Conflict of Interests:}
The author declares that there are no conflicts of interest.

\textbf{Acknowledgment:}
We would like to express our appreciation to the referees for their valuable comments and suggestions.

\end{document}